  \def\R{\mathbb{ R}}
  \def\N{\mathbb{ N}}
  \def\Sy{\mathbb{ S}}
  \def\rk{\mbox{\rm \texttt{rank}}}
  \def\tr{\mbox{\rm \texttt{Tr}}}
  \def\diag{\mbox{\rm \texttt{diag}}}
  \def\vect{\mbox{\rm \texttt{vec}}}
  \def\vect{\mbox{\rm \texttt{vec}}}
  \def\svec{\mbox{\rm \texttt{svec}}}
  \def\sp{\mbox{\rm \texttt{Span}}}
  \def\jac{\mbox{\rm \texttt{Jac}}}  
  \newtheorem{prop}{\bf Proposition}
  \newtheorem{rema}{\it Remark}
  \newtheorem{alg}{\bf Algorithm}
\begin{document}
 \title{\bf 
Low rank
solutions to  differentiable systems over matrices
and applications\thanks{
The research is supported by Vietnam National Foundation for Science and Technology Development (NAFOSTED) under grant number 
		101.01-2014.30.
	\newline
 	Email: \texttt{lethanhhieu@qnu.edu.vn}			
		}	
}

\author
{Thanh Hieu LE
}
\affil
{Department of Mathematics, Quy Nhon University, Vietnam}


\date{} 
 \maketitle
 
 \begin{abstract}
Differentiable systems in this paper means systems of equations that are described by differentiable real functions in real matrix variables.
 This paper proposes algorithms for 
finding minimal rank solutions to 
such
systems over (arbitrary and/or several structured) matrices
by using the Levenberg-Marquardt method (LM-method) for solving least squares problems. 
We then  
apply these algorithms
to solve several engineering problems such as 
the low-rank matrix completion problem and
the low-dimensional Euclidean embedding one.
Some numerical experiments illustrate the validity of the approach.

On the other hand, 
we provide some further properties of low rank solutions to
systems linear matrix equations.
This is useful when the differentiable function is linear or quadratic.
 \end{abstract}

{\bf Keywords:}
rank minimization problem, 
generalized Levenberg-Marquardt method,
positive semidefinite matrix,
low-rank matrix completion problem,
Euclidean distance matrix



\section{Motivation and preliminaries}


Several problems in either engineering or computational  mathematics
can be reformulated as 
{\it rank minimization problems} 
(shortly, RM-problems) in the form
\begin{equation}\label{eq_grmp_prob}
\begin{array}{llll}
\mbox{minimize} & \rk(X) \\
\mbox{subject to}& \\
								& X\in \mathcal{C}, 
\end{array}
\end{equation}
where $\mathcal{C}$ is a subset of $\R^{m\times n},$
the set of all $m$ by $n$ matrices with real entries.

RM-problem (\ref{eq_grmp_prob}) is computationally NP-hard in general,
even when $\mathcal{C}$ is an affine subset of 
$\R^{m\times n}.$
There hence is a number of algorithms for solving this problem with respect to special cases of $\mathcal{C},$ 
see, eg.,
 \cite{RFP10,Fazel02,LeSoVa13} and the references there in.
When the constraints are  defined by linear matrix equations, 
i.e.,
$\mathcal{C}$ is the solution set of a linear system of equations $\ell(X) =b\in \R^k,$
the present problem is called
{\it affine rank minimization problem} 
(shortly, ARM-problem)
and is in the form 
\cite{RFP10} 
\begin{equation}\label{eq_rmp_prob}
\begin{array}{llll}
\mbox{minimize} & \rk(X) \\
\mbox{subject to}& \\
								&X\in \R^{m\times n},\\
								& \ell(X) = b.
\end{array}
\end{equation}
When the constraint region is considered on the cone of positive semidefinite matrices,  
the authors in  \cite{MP97}
relaxed the non-convex rank objective function in problem (\ref{eq_rmp_prob}) into the nuclear 
norm that is a convex function.
The whole problem is then a semidefinite program \cite{p878}
and can be efficiently solved by SDP solvers.
%
%
%
In our point of view, 
by using the Cholesky decomposition, 
each positive semidefinite matrix $X$ can be written as $X = YY^T,$
$Y\in \R^{n\times n}.$
The linear map in the later problem (\ref{eq_rmp_prob})  now becomes a quadratic map in $Y.$

In this paper, 
we focus on the problem over a more general set $\mathcal{C},$ in comparison with the sets we have discussed above.
Such a set is determined by a differentiable map.
That is, we focus on the problem
\begin{equation}\label{eq_Frmp_prob}
\begin{array}{llll}
\mbox{minimize} & \rk(X) \\
\mbox{subject to}& \\
								&X\in \mathcal{C},\\
								& \phi(X) = b,
\end{array}
\end{equation} 
\noindent
where 
$\mathcal{C} \subseteq \R^{m\times n}$
and
$\phi: \R^{m\times n} \rightarrow \R^k$ is a differentiable  map.
This function is clearly non-convex in general.
Our method applies the generalized Levenberg-Marquardt method \cite{r665} for checking whether there exists a solution of rank $r,$
step by step, for $r=1,2,\ldots$
The differentiability of $\phi$ guarantees for the existence of its Jacobian in the Levenberg-Marquardt steps. 
It turns out that the problem of 
finding a matrix of rank $r = 1, 2, \ldots, \min\{m,n\},$ solving the equation $\phi(X)=b$
is the most important in our method.


We now recall some important results on matrix factorization in linear algebra that are used in the paper.

By $.^\mathrm{T}$ we denote the transpose  of matrices.
For a real symmetric matrix $A,$
i.e., $A^\mathrm{T} =A,$ 
by
$A\succeq 0$ we mean $A$ is positive semidefinite, i.e., $x^\mathrm{T}Ax \geq 0$ for all $x\in \R^n.$ 
This, equivalently, means its eigenvalues  are all non-negative.
For any two real symmetric matrices $A$ and $B,$
we write $A\succeq B$ if $A-B\succeq 0.$
Let $\Sy^n$ denote the set of $n$ by $n$ real symmetric  matrices,
and $\Sy_+^n$  denote the cone of  positive semidefinite matrices in $\Sy^n.$

\begin{prop}{\rm(see, e.g, \cite{Bhatia07} or \cite[Section 2.6, Observation 7.1.6]{b398})}\label{prop_ChoFac}
Any positive semidefinite matrix (PSD matrix) $A\in \Sy_+^n$ has a Cholesky decomposition 
$A = LL^\mathrm{T},$
where 
$L\in \R^{n\times n}$ 
is a lower triangular matrix which is called a Cholesky factor of $A.$
In particular,
if 
$r=\rk(A)$
then 
one can find $L\in \R^{n\times r}.$

Another fact is that
for two matrices $A,B \in \Sy^n,$ then
$A\succeq B$
if and only if
$P^{T}AP \succeq P^{T}BP$
for any nonsingular matrix $P\in \R^{n\times n}.$
\end{prop}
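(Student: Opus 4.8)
The plan is to treat the two assertions separately, since they are essentially independent, and to dispatch the congruence statement first because it is the quicker one. Set $M = A - B \in \Sy^n$, so that $A \succeq B$ means precisely $M \succeq 0$ and $P^T A P \succeq P^T B P$ means $P^T M P \succeq 0$. For any $y \in \R^n$ one has $y^T (P^T M P) y = (Py)^T M (Py)$. If $M \succeq 0$, the right-hand side is nonnegative for every $y$, so $P^T M P \succeq 0$; conversely, since $P$ is nonsingular, every $x \in \R^n$ can be written $x = Py$ with $y = P^{-1}x$, whence $x^T M x = y^T (P^T M P) y \geq 0$ and $M \succeq 0$. Only the backward direction uses the nonsingularity of $P$, to guarantee that $Py$ ranges over all of $\R^n$.

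For the Cholesky decomposition I would induct on $n$, building $L$ one column at a time and tracking the zero pivots in order to recover the rank. The base case $n = 1$ is immediate. For the step, partition
$$A = \begin{pmatrix} a_{11} & v^T \\ v & \tilde A \end{pmatrix}, \qquad v \in \R^{n-1}, \quad \tilde A \in \Sy^{n-1}.$$
If $a_{11} > 0$, take the first column of $L$ to be $(\sqrt{a_{11}},\, v^T/\sqrt{a_{11}})^T$; the Schur complement $\tilde A - v v^T / a_{11}$ is again positive semidefinite, so the inductive hypothesis furnishes a lower triangular Cholesky factor for it, which is pasted into the trailing block. The delicate case is $a_{11} = 0$, where positive semidefiniteness forces $v = 0$: if some $v_i \neq 0$, then evaluating the form at $x = (s, w^T)^T$ yields $2s\, v^T w + w^T \tilde A w$, which for a suitable $w$ with $v^T w \neq 0$ runs to $-\infty$ as $s \to \pm\infty$, contradicting $A \succeq 0$. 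Hence the first column of $L$ may be taken to be zero and one recurses on $\tilde A$.

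It then remains to establish the rank assertion $L \in \R^{n \times r}$ with $r = \rk(A)$. Here I would observe that a column of the constructed $L$ vanishes exactly at the steps where the current pivot is zero, and that the surviving nonzero columns are linearly independent: restricting $L$ to the row and column indices of the nonzero pivots gives a lower triangular matrix with strictly positive diagonal, hence nonsingular. Since $\rk(A) = \rk(LL^T) = \rk(L)$ over $\R$, the number of nonzero columns is exactly $r$, and deleting the zero columns leaves a factor in $\R^{n \times r}$ with $LL^T$ unchanged. I expect the main obstacle to be precisely this bookkeeping in the zero-pivot case, namely verifying both that $A \succeq 0$ genuinely forces $v = 0$ and that the surviving-column count equals $\rk(A)$. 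An alternative that avoids the column-counting is to begin from a spectral decomposition $A = Q\,\mathrm{diag}(\lambda_1,\dots,\lambda_r,0,\dots,0)\,Q^T$, set $L_0 = Q_r \Lambda_r^{1/2} \in \R^{n \times r}$ with $\Lambda_r = \mathrm{diag}(\lambda_1,\dots,\lambda_r)$, and then triangularize via an LQ factorization of $L_0$, at the cost of invoking the spectral theorem.
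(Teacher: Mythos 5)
Your proof is correct, but there is nothing in the paper to compare it against line by line: the paper states this proposition as a known fact, citing Bhatia and Horn--Johnson, and gives no proof of its own. What you have written is essentially the classical argument those references contain, made self-contained. The congruence part is exactly right, with the one genuinely necessary observation (nonsingularity of $P$ is needed only so that $Py$ sweeps out all of $\R^n$ in the backward direction). The factorization by induction on the leading pivot, using the Schur complement $\tilde A - vv^\mathrm{T}/a_{11}$ and the degenerate-case observation that $a_{11}=0$ forces $v=0$ for a PSD matrix, is the standard textbook proof. Your rank bookkeeping is also sound: the nonzero columns of $L$ are independent because the principal submatrix of $L$ indexed by the positive pivots is lower triangular with strictly positive diagonal, and $\rk(A)=\rk(LL^\mathrm{T})=\rk(L)$ (which is item i) of Proposition~\ref{prop_rkMat} in the paper) then forces the nonzero-column count to equal $r$. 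Two cosmetic remarks: in the zero-pivot case the form $2s\,v^\mathrm{T}w + w^\mathrm{T}\tilde A w$ tends to $-\infty$ only for one sign of $s$ (choose $s$ so that $s\,v^\mathrm{T}w<0$), not for both $s\to\pm\infty$; and after deleting the zero columns the $n\times r$ factor is lower trapezoidal rather than lower triangular in the square sense, which is all the proposition's ``$L\in\R^{n\times r}$'' claim requires. Your alternative route via the spectral theorem followed by an LQ factorization is equally valid and shorter, at the cost of invoking a heavier tool than the elementary induction.
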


\begin{prop}{\rm (see e.g, \cite[Section 0.4.6]{b398}}
\label{prop_rkMat}
Let $A$ be an $m\times n$ real matrix. Then 
\begin{enumerate}[\rm i)]
\item
	$
	\rk(A) = \rk(A^\mathrm{T}) = \rk(A A^\mathrm{T}) = \rk(A^\mathrm{T}A).
	$
\item
	$A\in \R^{m\times n}$ has rank $r$ if and only if there exist matrices 
$X\in \R^{r\times m},$ 
$Y\in \R^{r\times n}$
and 
$B\in \R^{r\times r}$
nonsingular  
with 
$\rk(X) = \rk(Y)=r$
such that 
$A = X^\mathrm{T} BY.$

A consequence,
$A$ can be written as 
$A= X^T Z$
with 
$Z= BY \in \R^{r\times n}$
and $\rk(X)= \rk(Z)=r.$
\end{enumerate}

\end{prop}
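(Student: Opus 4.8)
The plan is to prove Proposition \ref{prop_rkMat} by establishing each part through standard rank identities and a factorization argument. For part (i), the key tool is the rank-nullity theorem: I would show that $A$ and $A^\mathrm{T}A$ have the same null space, since $A^\mathrm{T}Ax = 0$ implies $x^\mathrm{T}A^\mathrm{T}Ax = \|Ax\|^2 = 0$, hence $Ax = 0$, and the reverse inclusion is trivial. Equality of null spaces over $\R^n$ forces $\nul(A) = \nul(A^\mathrm{T}A)$, so by rank-nullity $\rk(A) = \rk(A^\mathrm{T}A)$. The identity $\rk(A) = \rk(A^\mathrm{T})$ is the classical row-rank equals column-rank fact, and applying the null-space argument to $A^\mathrm{T}$ in place of $A$ yields $\rk(A^\mathrm{T}) = \rk(A A^\mathrm{T})$. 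Chaining these equalities gives the full statement of (i).

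For part (ii), I would argue both directions. For the ``if'' direction, suppose $A = X^\mathrm{T}BY$ with $X \in \R^{r\times m}$, $Y \in \R^{r\times n}$ both of rank $r$ and $B \in \R^{r\times r}$ nonsingular. Since $B$ is invertible and $X^\mathrm{T}, Y$ have rank $r$, the product cannot increase rank beyond $r$, so $\rk(A) \le r$; conversely, multiplying by $B^{-1}$ and using that full-rank factors preserve the rank of $BY$ shows $\rk(A) = r$. For the ``only if'' direction, suppose $\rk(A) = r$. I would select $r$ linearly independent columns of $A$ spanning its column space, collect them into a matrix $C \in \R^{m\times r}$ of rank $r$, and write each column of $A$ as a combination of these, giving $A = C D$ for some $D \in \R^{r\times n}$; since $\rk(A) = r = \rk(C)$, the factor $D$ must also have rank $r$. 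Setting $X^\mathrm{T} = C$ and taking $B = I_r$, $Y = D$ furnishes a factorization of the required shape, which I can then massage into the stated form by absorbing an invertible factor. The consequence $A = X^\mathrm{T}Z$ with $Z = BY$ is immediate: $Z$ has rank $r$ because $B$ is nonsingular and $Y$ has rank $r$.

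I anticipate the main obstacle to be the bookkeeping in the ``only if'' direction of (ii), specifically arguing that the factor $D$ (equivalently $Z$) inherits full rank $r$ and that one can present the decomposition in the exact three-factor form $X^\mathrm{T}BY$ with all three factors having the prescribed ranks. The cleanest route is to first establish the two-factor ``full-rank factorization'' $A = CD$ with $\rk(C) = \rk(D) = r$ directly from a basis of the column space, verify $\rk(D) = r$ by noting $\rk(A) \le \min\{\rk(C), \rk(D)\}$ together with $\rk(C) = r$ forces $\rk(D) \ge r$ while $D$ having only $r$ rows forces $\rk(D) \le r$, and then observe that the three-factor form is a trivial reparametrization (one may insert $I_r$ or any invertible $B$ by compensating in $Y$). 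Part (i) and the consequence are then essentially routine once the full-rank factorization is in hand.
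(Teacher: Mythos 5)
Your proof is correct, but there is no argument in the paper to compare it against: the paper states Proposition \ref{prop_rkMat} without proof, citing it as a known fact from \cite[Section 0.4.6]{b398}. Your blind attempt therefore fills a gap the paper leaves to the literature, and it does so with the standard argument. For (i), the null-space comparison ($A^\mathrm{T}Ax=0$ implies $\|Ax\|^{2}=x^\mathrm{T}A^\mathrm{T}Ax=0$, hence $Ax=0$ --- valid precisely because the entries are real) combined with rank--nullity gives $\rk(A)=\rk(A^\mathrm{T}A)$, and applying the same reasoning to $A^\mathrm{T}$ plus row-rank-equals-column-rank yields the whole chain. For (ii), the rank factorization $A=CD$ built from a basis of the column space, with $\rk(D)=r$ forced by $r=\rk(A)\le\rk(D)\le r$, followed by inserting $B=I_{r}$, settles the ``only if'' direction, and the ``if'' direction follows from rank preservation under full-rank factors. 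The one spot where your write-up is loose is that ``if'' direction: ``multiplying by $B^{-1}$ and using that full-rank factors preserve the rank of $BY$'' should be stated precisely, e.g.\ $X^\mathrm{T}\in\R^{m\times r}$ has full column rank, so $\rk(X^\mathrm{T}M)=\rk(M)$ for any $M$ with $r$ rows, and $\rk(BY)=\rk(Y)=r$ since $B$ is nonsingular; alternatively one can recover $B=(XX^\mathrm{T})^{-1}XAY^\mathrm{T}(YY^\mathrm{T})^{-1}$ to see $\rk(A)\ge r$ directly. This is a matter of wording, not a genuine gap, and the consequence $A=X^\mathrm{T}Z$ is handled correctly.
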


\begin{prop}\label{prop_CharLmap}
For any linear map 
$\ell: \R^{m\times n} \rightarrow \R$ 
one can find a matrix $A\in \R^{m\times n}$ such that
$$
\ell(X)= \tr(A^\mathrm{T}X) = \tr(AX^\mathrm{T}) , \quad\forall X\in \R^{m\times n}.
$$

Specially, 
if $\ell: \Sy^n \rightarrow \R$  
then  $A$ can be found in $\Sy^n,$
i.e., $A^\mathrm{T}=A.$ 
\end{prop}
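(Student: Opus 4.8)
The plan is to recognize this as the finite-dimensional Riesz representation statement for the space $\R^{m\times n}$ equipped with the Frobenius inner product $\langle A,X\rangle:=\tr(A^\mathrm{T}X)$. First I would fix the standard basis $\{E_{ij}\}$ of $\R^{m\times n}$, where $E_{ij}$ has entry $1$ in position $(i,j)$ and $0$ elsewhere, so that every $X=(x_{ij})$ expands uniquely as $X=\sum_{i,j}x_{ij}E_{ij}$. Setting $a_{ij}:=\ell(E_{ij})$ and $A:=(a_{ij})$, linearity of $\ell$ immediately gives $\ell(X)=\sum_{i,j}x_{ij}\,\ell(E_{ij})=\sum_{i,j}a_{ij}x_{ij}$. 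It then remains only to verify the two identities $\tr(A^\mathrm{T}X)=\sum_{i,j}a_{ij}x_{ij}$ and $\tr(AX^\mathrm{T})=\sum_{i,j}a_{ij}x_{ij}$, each of which is a direct entrywise computation of a diagonal sum; both reduce to $\sum_{i,k}a_{ik}x_{ik}$, which shows that the two trace expressions coincide and equal $\ell(X)$.

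For the symmetric refinement I would work directly with a basis of $\Sy^n$, since the off-diagonal $E_{ij}$ are not themselves symmetric and so $\ell$ is not even evaluated on them. Concretely I would use the basis $\{E_{ii}\}_i \cup \{E_{ij}+E_{ji}\}_{i<j}$, expand a symmetric $X$ accordingly, and define $A$ by $a_{ii}:=\ell(E_{ii})$ and $a_{ij}:=a_{ji}:=\tfrac12\,\ell(E_{ij}+E_{ji})$ for $i<j$. This $A$ is symmetric by construction, and a short computation using $x_{ij}=x_{ji}$ shows that $\tr(AX)=\tr(A^\mathrm{T}X)=\ell(X)$ for every $X\in\Sy^n$, as required.

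The only mildly delicate point, and the one I would flag as the main obstacle, is ensuring that a representing matrix for the symmetric case can be taken symmetric. The conceptual reason is the Frobenius-orthogonality of skew-symmetric and symmetric matrices: if $S=-S^\mathrm{T}$ and $X=X^\mathrm{T}$ then $\tr(S^\mathrm{T}X)=-\tr(SX)=0$, since $\tr(SX)=\tr((SX)^\mathrm{T})=\tr(X^\mathrm{T}S^\mathrm{T})=-\tr(SX)$. Consequently the value $\tr(A^\mathrm{T}X)$ on a symmetric $X$ depends only on the symmetric part $\tfrac12(A+A^\mathrm{T})$ of $A$, so any representing matrix obtained from the general construction may be replaced by its symmetrization without changing its action on $\Sy^n$. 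This gives an alternative, cleaner route to the symmetric case and confirms that the direct construction above indeed yields a symmetric representative; everything else in the argument is routine bookkeeping.
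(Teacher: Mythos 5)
Your proposal is correct and takes essentially the same route as the paper: expand $X$ in the standard basis $\{E_{ij}\}$, define $A$ entrywise by $a_{ij}=\ell(E_{ij})$, identify $\ell(X)$ with $\tr(A^\mathrm{T}X)=\tr(AX^\mathrm{T})$, and pass to the symmetrized matrix $\tfrac12(A+A^\mathrm{T})$ for the $\Sy^n$ case. If anything, your handling of the symmetric case is slightly more careful than the paper's, which writes $\ell(E_{ij})$ for the non-symmetric off-diagonal $E_{ij}$ even though $\ell$ is only defined on $\Sy^n$; your use of the genuine basis $\{E_{ii}\}_i\cup\{E_{ij}+E_{ji}\}_{i<j}$ (or, equivalently, the symmetric/skew-symmetric Frobenius-orthogonality remark) repairs that minor imprecision.
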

\begin{proof}
Suppose  $\ell: \R^{m\times n} \rightarrow \R$ 
is a linear map.
Consider  $\R^{m\times n}$ as a real vector space endowed with the basis
$\{E_{ij}| \ i=1,\ldots, m; \ j=1,\ldots,n \},$
where $E_{ij}$ is the $m\times n$ matrix whose entries are zeros except for
the $(i,j)$th one being 1. 
Let 
$A = [\ell(E_{ij})]_{i=1,\ldots,m}^{j=1,\ldots,n} \in \R^{m\times n}.$
Then for every 
$X= [x_{ij}] \in \R^{m\times n},$
$X = \sum_{i,j} x_{ij}E_{ij},	$
we have
$$
\ell(X)  
= \sum_{i=1}^m \sum_{j=1}^n x_{ij} \ell(E_{ij})
= \tr(AX^\mathrm{T})
= \tr(A^\mathrm{T}X).
$$
 
If $\ell: \Sy^n \rightarrow \R$ then it follows that
\begin{align*}
\ell(X) 
&= \sum_{i=1}^m \sum_{j=1}^n x_{ij} \ell(E_{ij})
= \sum_{i=1}^n x_{ii}\ell(E_{ii})  + \sum_{1\leq i<j \leq n} x_{ij}[\ell(E_{ij}) + \ell(E_{ji})] \\
&= \tr[(\frac{A^\mathrm{T}+A}{2})X],
\end{align*}
and the proof is done.
\end{proof}

 We now recall some notation and results from matrix calculus. 
 Let $f:\R^n \rightarrow \R^m$ be a $m\times 1$ vector function of a $n\times 1$ vector $x.$
 The  {\it derivative} (or {\it Jacobian matrix}) of $f$ is the $m\times n$ matrix defined by
 $$
 \jac f(x) \ \triangleq\ \frac{\partial f(x)}{\partial x} 
 = 
 \begin{bmatrix}
 \frac{\partial f_1(x)}{\partial x_1}  & \ldots   & \frac{\partial f_1(x)}{\partial x_n} \\
 \vdots                                                  & \ddots   & \vdots \\
 \frac{\partial f_m(x)}{\partial x_1} & \ldots    & \frac{\partial f_m(x)}{\partial x_n}
 \end{bmatrix} 
 \in \R^{m\times n}.
 $$
 
 We now recall a general definition for the derivative of a {\it matrix valued  function}. 
 Suppose 
 $F: \R^{m\times n} \rightarrow \R^{p\times q}$ 
 is a  $(p\times q)$-matrix valued function of an $(m\times n)$-matrix variable $X.$
 Suppose that $F= [F_{rs}] \in \R^{p\times q}$
 and 
we define the derivative of this function as the $pq\times mn$ matrix 
 $$
 \jac F(X) 
 \triangleq
\frac{\partial\vect{F}(X)}{\partial \vect X}
= 
\begin{bmatrix}
\frac{\partial F_{11}(X)}{\partial x_{11}} & \frac{\partial F_{11}(X)}{\partial x_{21}} & \ldots & \frac{\partial F_{11}(X)}{\partial x_{mn}}\\
\frac{\partial F_{21}(X)}{\partial x_{11}} & \frac{\partial F_{21}(X)}{\partial x_{21}} & \ldots & \frac{\partial F_{21}(X)}{\partial x_{mn}}\\
				\vdots								 & 						\vdots						  & \ddots & \vdots\\
\frac{\partial F_{pq}(X)}{\partial x_{11}} & \frac{\partial F_{pq}(X)}{\partial x_{21}} & \ldots & \frac{\partial F_{pq}(X)}{\partial x_{mn}}\\
\end{bmatrix}
\in \R^{pq\times mn}, 
 $$ 
 where
 $\vect X \in \R^{mn\times 1}$ 
 denotes the vector obtained  by stacking its columns one underneath
the other,
 i.e.,
 if $X\in \R^{m\times n}$ and $X_j, \ j=1,\ldots,n,$ are the columns of $X$ then
 $$
 \vect X = 
 \begin{bmatrix}
 X_1^\mathrm{T} &  \ldots & X_n^\mathrm{T}
 \end{bmatrix}^\mathrm{T}.
 $$
%
 We list below the important properties of the derivative of trace functions  
 that will be used in either paper or \textsc{Matlab} codes (see, eg., \cite{q715}).
 \begin{itemize}
 \item
 	Let $A$ be a given matrix in $\R^{m\times n}.$ 
 			If $F: \R^{m\times n} \rightarrow \R$ is defined by 
 			$F(X) = \tr(A^\mathrm{T}X),\ \forall X\in \R^{m\times n},$
 			then 
			\begin{equation}
 			\jac F(X) = \vect(A)^\mathrm{T}.			
			\end{equation}
 \item 
 	Let $A\in \R^{n\times n}$ and  $B\in \R^{m\times m}$ be given.
 			If $F: \R^{m\times n} \rightarrow \R$ is defined by 
 			$F(X) = \tr(XAX^\mathrm{T}B)$
 			then 
 			\begin{equation}\label{eq_JacTr1}
 			\jac F(X) = \vect(B^\mathrm{T}XA^\mathrm{T} + BXA)^\mathrm{T}. 			
 			\end{equation}
 			
\item
 	Let $A,B$ be two given matrices in $\R^{m\times m}.$ 
 			Then for all $X\in \R^{m\times m},$
			\begin{equation}
 			\jac \tr(AXB) = \vect(A^\mathrm{T}B^\mathrm{T})^\mathrm{T}
 			\quad
 			\mbox{and}
 			\quad
 			\jac \tr(AX^\mathrm{T}B) = \vect(BA)^\mathrm{T}.				
			\end{equation}
 \end{itemize}
  
This paper is organized as follows.
Section \ref{sec_idea} 
presents the main algorithm for solving problem (\ref{eq_Frmp_prob}).
This algorithm will be applied to particular problems with respect to several types of constraint sets. 
The affine rank minimization problem over arbitrary matrices is presented in Section  \ref{sec_rmp_on_ArbMat}
and a similar method applied for positive semidefinite matrices is handled in Section \ref{sec_rmp_on_psdMat}.
Section \ref{sec_app} summarizes some applications  
of our solution method to several problems in engineering.
The corresponding 
numerical experiments are exhibited in Section \ref{sec_numexp}.
The last section presents the conclusion and discussion for the future works.

\section{The idea for solving problem (\ref{eq_Frmp_prob}) } \label{sec_idea}

In this work,
with the help of Proposition \ref{prop_rkMat},
we solve problem (\ref{eq_Frmp_prob}) by 
using the generalized Levenberg-Marquardt method \cite{r665} to
find a matrix $X \in \mathcal{C}\subset \R^{m\times n}$ step by step for 
$\rk(X)= 1, 2, \ldots,\min\{m,n\}$ such that 
$\phi(X) = b.$  
In this situation,
we consider the least square problem 
with respect to
the function $F: \R^{\mu}  \rightarrow  \R^k,$
with appropriate integer number $\mu,$ whose coordinate functions are defined by
\begin{equation}\label{eq_genLSP}
F_j(X) = \phi_j(X) -b_j,  \quad \forall j =1,\ldots , k, 
\quad \forall X\in \mathcal{C}.
\end{equation}
We can summary this algorithm as follows.

\begin{alg} \label{alg0} \rm 
Find minimal-rank matrix solving problem (\ref{eq_Frmp_prob}).
\\
{\it Input}: Scalars $b_1,\ldots, b_k$ and function $\phi.$
\\	
{\it Output}: a solution $X\in \mathcal{C} \subset \R^{m\times n}$  to (\ref{eq_Frmp_prob}) .
	\begin{enumerate}
		\item 
			Set $r=1.$
		\item 
			Solve system (\ref{eq_genLSP}) by applying 
     		the Levenberg-Marquardt method \cite{r665}. 
		\item 
			If  (\ref{eq_genLSP}) has a numerical solution then stop.\\
	      	Else, set $r=r+1$ and go to Step 2.
	\end{enumerate}
\end{alg}

\vskip4mm
In fact, to perform the experiments, 
the variable matrices are vectorized.
Namely, 
the functions $F_j$  in (\ref{eq_genLSP}) is  
$\vect(X).$
This suggests us to study the rank one solutions to systems of equations.

\section{Affine rank minimization problem over arbitrary matrices} \label{sec_rmp_on_ArbMat}
In this section we are concentrating on numerically  solving ARM-problem (\ref{eq_rmp_prob}).
Using Proposition \ref{prop_CharLmap} each of the $k$ linear equations 
$\ell_i(X) = b_i$, $i = 1,2,\ldots,k$ is written as 
$$
\ell_i(X)= \tr(A_i^\mathrm{T}X) = \tr(A_iX^\mathrm{T}) = b_i.
$$
The function of the least square problem in this case is determined as:
$$
F_j(X) = \ell_j(X) - b_j, \quad j=1, \ldots , k.
$$
It is clear that such a matrix $X$ of rank $r$ can be found only first $r$ columns  
$X(:, 1:r)$
and its $n-r$ last ones are identified to zero vectors.
However,
we will see later this may not applicable in some particular cases,
for example, the matrix completion problem below.
A modification is to apply Proposition \ref{prop_rkMat} to find $X$ as 
$X = Y^\mathrm{T}Z$
for two matrix variables $Y\in\R^{r\times m}$ and $Z\in \R^{r\times n}.$
Namely we now focus on the following problem
\begin{equation}\label{eq_mrmp_prob}
\begin{array}{llll}
\mbox{minimize} & \rk(Y^\mathrm{T}Z) \\
\mbox{subject to}& \\
								&(Y,Z)\in \R^{r\times m}\times \R^{r\times n},\\
								& \ell(Y^\mathrm{T}Z) = b.
\end{array}
\end{equation}
Problem (\ref{eq_mrmp_prob}) is then a special case of problem (\ref{eq_Frmp_prob}) with $\phi(Y,Z) = \ell(Y^\mathrm{T}Z).$

To perform this modification, we need the following auxiliary results.
Set
$W = [ Y \quad Z]  \in \R^{r\times (m+n)}$
for each 
$(Y,Z)\in\R^{r\times m} \times \R^{r\times n}.$
Recall that the linear map
$\ell: \R^{m\times n} \rightarrow \R^k$
is defined by $k$ matrices
$A_1,\ldots, A_k \in \R^{m\times n}.$
That is
$$
\ell(U) =\left[\tr(A_1^\mathrm{T} U) \quad \ldots \quad   \tr(A_k^\mathrm{T} U) \right]^\mathrm{T}, \enskip \forall U\in \R^{m\times n}.
$$
For each $r=1,2,\ldots, p=\min\{m,n\},$
the least squares problem in this situation is then defined by the function 
$F: \R^{r\times (m+n)} \equiv \R^{r\times m} \times \R^{r\times n} \longrightarrow \R^k,$
$$
F(W) = F(Y,Z) = \ell(Y^\mathrm{T}Z) -b, \quad \forall W= (Y,Z) \in  \R^{r\times m} \times \R^{r\times n}.
$$
It is clear that each coordinate function
$F_i: \R^{r\times (m+n)} \equiv \R^{r\times m} \times \R^{r\times n} \longrightarrow \R$
is determined by
$$
F_i(W) = F_i(Y,Z) = \tr(A_i^\mathrm{T}Y^\mathrm{T}Z) -b_i, \quad \forall W= (Y,Z) \in  \R^{r\times m} \times \R^{r\times n}.
$$
The Jacobian  matrix of $F$ can hence be calculated as
$$
\jac (F) 
= \frac{\partial F}{\partial W} 
= \frac{\partial \vect F}{\partial \vect W}
= \begin{bmatrix}
	\frac{\partial F_1}{\partial W} \\ 
	\vdots \\
	\frac{\partial F_k}{\partial W}  
\end{bmatrix}
= \begin{bmatrix}
	\frac{\partial F_1}{\partial Y} & \frac{\partial F_1}{\partial Z}  \\ 
	\vdots & \vdots \\
	\frac{\partial F_k}{\partial Y}  & \frac{\partial F_k}{\partial Z}
\end{bmatrix}
\in \R^{k\times r(m+n)},
$$
where
\begin{align*}
\vect(W)
&= [\vect(Y)^\mathrm{T} \quad \vect(Z)^\mathrm{T}]^\mathrm{T},\\
\frac{\partial F_i}{\partial Y} 
&=  \frac{\partial \tr(A_i^\mathrm{T} Y^\mathrm{T}Z) }{\partial Y}
= \vect(ZA_i^\mathrm{T})^\mathrm{T}, \\
\frac{\partial F_i}{\partial Z} 
&=  \frac{\partial \tr(A_i^\mathrm{T} Y^\mathrm{T}Z) }{\partial Z}
= \vect(YA_i)^\mathrm{T}.
\end{align*}
Algorithm \ref{alg0} will find a solution $W=[Y \quad Z]$ and then a solution to problem (\ref{eq_mrmp_prob}) can be defined as $X=Y^T Z.$
Some corresponding numerical results are presented in Section \ref{sec_numexp}.

\section{Rank minimization problem over positive semidefinite matrices} \label{sec_rmp_on_psdMat}
In this section we focus on the ARM-problem for semidefinite matrices (\ref{eq_Frmp_prob}).
By Proposition \ref{prop_CharLmap}, 
we can characterize the linear map $\ell$ by $k$ symmetric matrices 
$A_1,\ldots, A_k\in \Sy^n,$
with respect to 
$b_1,\ldots, b_k.$
In the subsection below, we develop some more properties of the solutions to a system of linear equations.
This might be not for our algorithm but it could be useful information in literature.

\subsection{Solutions to systems of linear equations}

Consider the system of linear equations as follows:
\begin{equation}\label{eq_mprob}
 \tr(A_i X)= b_i, \quad i=1, \ldots, m,
\end{equation}
where $A_i, X$ are real symmetric of order $n$
and $b=[b_1 \ \ldots \ b_m]^T \in \R^m.$
The corresponding homogeneous of system (\ref{eq_mprob}) is defined by
\begin{equation}\label{eq_hmprob}
 \tr(A_i X)= 0, \quad i=1, \ldots, m.
\end{equation}
On the other hand, for such a nonhomogeneous system (\ref{eq_mprob}), 
we call the system
\begin{equation}\label{eq_1mprob}
\tr(\tilde{A}_i \tilde{X} ) = 0, \quad i=1, \ldots , m,
\end{equation}
with
$
\tilde{A_i} =
\begin{bmatrix}
A_i & 0\\ 0 & -b_i
\end{bmatrix},\
$
its 
``\textit{dominating system}''.

We also note that system (\ref{eq_mprob}) can be written in the classical form:
\begin{equation} \label{eq_ClasSyst}
\svec(A_i)^T \svec(X) = b_i,
\quad \mbox{or} \quad 
\mathcal{A} \svec (X) = b,
\end{equation}
where
$$ 
\mathcal{A} = 
\begin{bmatrix}
\svec(A_1)^T \\
\vdots\\
\svec(A_m)^T
\end{bmatrix} \in \R^{m\times \tau(n)}.
$$
Such a system has $\tau(n)$ variables. 
It is well known 
by Kronecker-Capelli theorem \cite{Kurosh80} 
that the system 
$\mathcal{A} \ \svec (X) =b$
has a solution if and only if 
$\rk(\mathcal{A}) = \rk (\tilde{A}),$
where $\tilde{\mathcal{A}} := [\mathcal{A}\ |\ b].$
Moreover, if 
$\rk(\mathcal{A}) = \rk (\tilde{A})= r$
then such a system has only one solution when $r= \tau(n).$
In the case $r< \tau(n),$
such a system has many solutions in which $r$ variables linearly dependent on $\tau(n)-r$ other variables.
We also note that the system $\{A_i\}$ is linearly (in)dependent in $\Sy^n$ if and only if 
so is the system $\{\svec(A_i)\}$ in $\R^{\tau(n)}.$

The following result gives us an equivalence of the non-homogeneous linear system  and 
a homogeneous linear system in one more variable,
in the case that the matrices $A_i$ are linearly independent.
\begin{prop}\label{prop_EquiExist}
System (\ref{eq_mprob}), with linearly independent matrices $A_i$'s and  $b\neq 0,$ $m\leq \tau(n),$
 has a solution (must be nonzero) if and only if system (\ref{eq_1mprob})
has a nontrivial solution.

Moreover,
if the positive semidefiniteness of a solution to one of these two systems is valuable
then so is a solution to the other system.
\end{prop}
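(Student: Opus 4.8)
The plan is to establish the two directions of the equivalence by passing through the "classical form" (\ref{eq_ClasSyst}) and invoking the Kronecker--Capelli criterion, and then to track positive semidefiniteness across the dictionary between $X$ and the enlarged variable $\tilde X$.

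First I would set up the correspondence between the two systems. Writing the nonhomogeneous system (\ref{eq_mprob}) in the form $\mathcal{A}\,\svec(X) = b$ with $\mathcal{A}\in\R^{m\times\tau(n)}$, the linear independence of the $A_i$'s is equivalent to $\rk(\mathcal{A}) = m$ (full row rank), so $\rk(\mathcal{A}) = \rk(\tilde{\mathcal{A}}) = m$ automatically, where $\tilde{\mathcal{A}} = [\mathcal{A}\,|\,b]$. By Kronecker--Capelli the nonhomogeneous system is therefore always solvable; moreover, since $b\neq 0$, no solution can be the zero matrix (as $\tr(A_i\cdot 0)=0\neq b_i$ for some $i$), which explains the parenthetical "(must be nonzero)".

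Next I would analyze the dominating system (\ref{eq_1mprob}). A solution $\tilde X\in\Sy^{n+1}$ can be block-decomposed as
\begin{equation}\label{eq_blockX}
\tilde X =
\begin{bmatrix}
X & u\\
u^\mathrm{T} & t
\end{bmatrix},
\quad X\in\Sy^n,\ u\in\R^n,\ t\in\R.
\end{equation}
Using the block form of $\tilde A_i$, the equation $\tr(\tilde A_i\tilde X)=0$ reduces to $\tr(A_i X) - b_i\, t = 0$, so the off-diagonal block $u$ is free and only $X$ and the scalar $t$ are constrained. The key observation is the scaling trick: if $t\neq 0$, then $X/t$ solves (\ref{eq_mprob}); conversely, any solution $X$ of (\ref{eq_mprob}) yields the nontrivial solution $\tilde X = \mathrm{diag}(X,1)$ of (\ref{eq_1mprob}). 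The only thing to rule out in the forward direction is that every nontrivial solution of (\ref{eq_1mprob}) has $t=0$; but $t=0$ together with the constraints would force $\tr(A_iX)=0$ for all $i$, i.e. $X$ solves the homogeneous system (\ref{eq_hmprob}), and one must check that the construction still produces (or is compatible with) a genuine solution of (\ref{eq_mprob}). I expect this case distinction — guaranteeing a nontrivial solution with $t\neq 0$ — to be the main obstacle, and the cleanest route is simply to produce $\tilde X = \mathrm{diag}(X,1)$ directly from a solution $X$ of (\ref{eq_mprob}), which exists by the Kronecker--Capelli step above, thereby sidestepping the need to analyze arbitrary solutions with $t=0$.

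Finally, for the positive semidefiniteness claim I would argue in both directions using the block form (\ref{eq_blockX}). If $X\succeq 0$, taking $u=0$ and $t=1$ gives $\tilde X = \mathrm{diag}(X,1)\succeq 0$. Conversely, if some solution $\tilde X\succeq 0$ of (\ref{eq_1mprob}) has $t>0$, then by the Schur complement its leading principal block satisfies $X \succeq t^{-1}uu^\mathrm{T}\succeq 0$, and the scaled matrix $X/t\succeq 0$ solves (\ref{eq_mprob}); here I would invoke the congruence invariance of semidefiniteness recorded in Proposition \ref{prop_ChoFac} (namely $A\succeq B \iff P^\mathrm{T}AP\succeq P^\mathrm{T}BP$) to justify that scaling by $1/t>0$ preserves $\succeq 0$. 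This closes the equivalence and shows that the property of admitting a PSD solution transfers between the two systems.
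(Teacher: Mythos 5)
Your treatment of the existence equivalence is correct, and it takes a genuinely different (and cleaner) route than the paper's. The paper proves the direction from (\ref{eq_1mprob}) to (\ref{eq_mprob}) by arguing that not every solution of the dominating system can have $\tilde{x}_{(n+1)(n+1)}=0$ (a Gaussian-elimination/rank argument exploiting linear independence of the $A_i$), extracting a solution with $t\neq 0$, and scaling its leading $n\times n$ block by $1/t$. You instead observe that linear independence gives $\rk(\mathcal{A})=\rk(\tilde{\mathcal{A}})=m$, so by Kronecker--Capelli system (\ref{eq_mprob}) is solvable unconditionally; together with $X\mapsto\diag(X,1)$ for the other direction, both sides of the biconditional are always true under the stated hypotheses, and the equivalence is immediate. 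This buys brevity and makes transparent that the hypotheses alone force solvability; what it gives up is the explicit conversion of a dominating-system solution with $t\neq 0$ into a solution of (\ref{eq_mprob}), which is precisely the device the paper later leans on for the semidefiniteness claim.

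Your proof of the ``moreover'' part, however, has a genuine gap, and it sits exactly in the case you silently dropped: a nonzero PSD solution $\tilde{X}\succeq 0$ of (\ref{eq_1mprob}) with $t=0$. Your Schur-complement argument covers only $t>0$. When $t=0$, positive semidefiniteness forces $u=0$, so $\tilde{X}=\diag(X,0)$ where $X\succeq 0$ is a nonzero solution of the \emph{homogeneous} system (\ref{eq_hmprob}), and nothing converts it into a PSD solution of (\ref{eq_mprob}). This gap cannot be repaired, because the claim fails there: take $n=2$, $m=1$, $A_1=\diag(0,-1)$, $b_1=1$. Every solution of (\ref{eq_mprob}) then has $x_{22}=-1$, so none is PSD; yet $\tilde{A}_1=\diag(0,-1,-1)$ and $\tilde{X}=\diag(1,0,0)$ is a nonzero PSD solution of (\ref{eq_1mprob}). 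So the transfer of PSD solvability from (\ref{eq_1mprob}) back to (\ref{eq_mprob}) is false as stated. To be fair, the paper's own proof shares this defect --- it dismisses the claim as ``an immediate consequence'' of the existence argument, which only works when the PSD solution at hand has $t>0$; your write-up at least makes the restriction $t>0$ explicit, but a complete proof would have to either add a hypothesis excluding the $t=0$ case or note that the statement must be weakened to the one direction (\ref{eq_mprob}) $\Rightarrow$ (\ref{eq_1mprob}).
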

\begin{proof}
If
$0\neq X =[x_{ij}]\in \Sy^n$   is a  solution to (\ref{eq_mprob})
then one can check
$$
0\neq 
\tilde{X} =
\begin{bmatrix}
X\ \ \ \  & 0_{n\times 1}\\
0_{1\times n} & 1\ \ \ \ 
\end{bmatrix}
\in \Sy^{n+1}
$$
is a solution to (\ref{eq_1mprob}) since
$$
\tr(\tilde{A}_i \tilde{X}) = \tr(A_i X)  - b_i = 0, \quad \forall i=1,\ldots, m.
$$

For the opposite direction,
we first note that if the homogeneous dominating system (\ref{eq_1mprob}) has nonzero solutions then
there exists one whose $(n+1, n+1)$st entry is nonzero. 
Indeed,
since $\{A_i\}_{i=1}^m$ is linearly independent, so is $\{\tilde{A}_i\}_{i=1}^m.$
But the homogeneous dominating system (\ref{eq_1mprob}) has $m$ equations and $\tau(n+1)$ variables.
Its solution vector space is hence of $\tau(n+1)-m>0$ dimensional since it has a nonzero positive semidefinite solution, provided by the hypothesis.
A basis vector can be chosen with $t:=\tilde{x}_{(n+1)(n+1)} \neq 0.$ 
Indeed,
if every solution $t$ was zeros then there would exist a nonsingular matrix $P$ (exists from the Gaussian elimination)  such that
$$
P \tilde{A} =
\begin{bmatrix}
\bullet  &\bullet & \ldots & \bullet\\
0  &\bullet & \ldots & \bullet\\
\vdots  &\vdots & \ddots & \vdots\\
0		  &0		 & \cdots & 1\\
\end{bmatrix} .
$$
This implies $\rk( \mathcal{A}) <m.$
This contradicts to the fact that $\{A_i\}_{i=1}^m $ is linearly independent.

With a solution $\tilde{X}$ satisfying the above discussion,
let $X$ be the $n\times n$ leading principle submatrix of $\tilde{X},$
we have
$$
0 = \tr(\tilde{A}_i \tilde{X}) = \tr(A_i X) -b_i t, \quad i=1,\ldots, m.
$$
This $\frac{1}{t} X$
is a solution of (\ref{eq_mprob}).

The rest of the proposition is an immediate consequence of what have shown above.
\end{proof}

\begin{rema}\rm
Even though some nonzero solutions of two systems (\ref{eq_mprob}) and $(\ref{eq_1mprob})$ stated in Proposition \ref{prop_EquiExist} simultaneously exist,
they do not need have the same rank. 
To see this, 
let us consider the linear system 
$\tr(A_1 X) = \tr(A_3 X) =0,$
$\tr(A_2X) = -1,$
where
$$
A_1 = \diag(1,-1,0), \quad
A_2 = \diag(1,0,-1) \mbox{ and }
A_3 = 
\begin{bmatrix}
0 & 1 & 0 \\
1 & 0 & 0\\
0 & 0 & 0\\
\end{bmatrix}
$$
and they are linearly independent.
The matrices $\tilde{A}_i$ are then defined as 
$$
\tilde{A}_1 = \diag(1,-1,0,0), \quad
\tilde{A}_2 = \diag(1,0,-1,-1) \mbox{ and }
\tilde{A}_3 = 
\small{
\begin{bmatrix}
0 & 1 & 0 & 0\\
1 & 0 & 0 & 0\\
0 & 0 & 0& 0\\
0 & 0 & 0 & 0
\end{bmatrix}.
}
$$
It is shown in \cite{Zhao2013} that the dominating homogeneous system (\ref{eq_1mprob}) has no rank-one solution 
but a rank-three solution 
$\tilde{X} = \diag(1,1,1,0).$
In our situation,
 we can find a rank-two solution, for example, 
$
\tilde{Y} = 
\scriptsize{
\begin{bmatrix}
0 & 0 & 0 & 0\\
0 & 0 & 0 & 0\\
0 & 0 & 1 & 1\\
0 & 0 & 1 & -1
\end{bmatrix}
}.
$
However, 
the initial non-homogeneous system defined by the matrices $A_1, A_2, A_3$ has a rank-one solution
$
X = 
\scriptsize{
\begin{bmatrix}
0 & 0 & 0 \\
0 & 0 & 0 \\
0 & 0 & 1
\end{bmatrix}.
}
$
\end{rema}


The following proposition tells us the relationship between the existence of a positive definite element in 
$\texttt{Span}(A_1, \ldots, A_m) :=\{\sum\limits_{i=1}^m t_i A_i:\ t_i \in \R\}$
and that of trivial solution of the system $\tr(A_iX) =0,$ $i= \overline{1,m}$
over $\Sy_+^n.$
This is due to Bohnenblust \cite{Boh48} and is restated in some equivalent versions in 
\cite{r697, KlepSch13, Zhao2013}. 
In the their works, the proofs 
are mainly based on either the separation theorem for two nonempty convex sets (see, eg., \cite[Theorem III.1.2]{Barvinok02})
or the SDP duality theory (see, eg., \cite{b493}).
In our situation, 
we use only knowledge on linear algebra,
in particular, the theory of orthogonal complement in an inner-product vector space.
This also gives us a stronger result, compared with the existence one.

\begin{prop} {\rm \cite{KlepSch13, Zhao2013}}\label{prop_Boh}
With the notation above, we have
$$
\{X\in \Sy_+^n|\ \tr(A_iX) =0, \forall i=\overline{1,m}\}=\{0\}
\Longleftrightarrow
\Sy_{++}^n \cap \sp(A_1, \ldots, A_m) \neq \emptyset.
$$
\end{prop}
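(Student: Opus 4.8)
The plan is to prove the equivalence by working in the inner-product space $\Sy^n$ equipped with the trace inner product $\langle A, B\rangle = \tr(AB)$, and to characterize both sides in terms of the orthogonal complement of $\sp(A_1,\ldots,A_m)$. Writing $\mathcal{V} = \sp(A_1,\ldots,A_m)$, the condition $\tr(A_i X) = 0$ for all $i$ says precisely that $X \in \mathcal{V}^\perp$. Thus the left-hand side reads $\Sy_+^n \cap \mathcal{V}^\perp = \{0\}$, and the claim becomes
\begin{equation*}
\Sy_+^n \cap \mathcal{V}^\perp = \{0\}
\ \Longleftrightarrow\
\Sy_{++}^n \cap \mathcal{V} \neq \emptyset .
\end{equation*}
The key structural fact I would exploit is that $\Sy_+^n$ is a self-dual cone under this inner product: its dual cone $(\Sy_+^n)^* = \{Y : \tr(XY)\geq 0 \ \forall X\in\Sy_+^n\}$ equals $\Sy_+^n$ itself, and moreover the interior of $\Sy_+^n$ is exactly $\Sy_{++}^n$.

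For the harder direction, the reverse implication ($\Leftarrow$), I would argue directly. Suppose there is a positive definite $S \in \Sy_{++}^n \cap \mathcal{V}$, and let $X \in \Sy_+^n \cap \mathcal{V}^\perp$. Since $S\in\mathcal{V}$ and $X\in\mathcal{V}^\perp$ we get $\tr(SX)=0$. But $S\succ 0$ and $X\succeq 0$ force $\tr(SX)=0$ to imply $X=0$: writing $S=LL^\mathrm{T}$ with $L$ invertible (Cholesky, Proposition \ref{prop_ChoFac}), we have $0=\tr(SX)=\tr(L^\mathrm{T}XL)$, and $L^\mathrm{T}XL\succeq 0$ has zero trace, hence $L^\mathrm{T}XL=0$ and therefore $X=0$. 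This gives $\Sy_+^n\cap\mathcal{V}^\perp=\{0\}$. The forward direction ($\Rightarrow$) is the main obstacle, because it is genuinely an existence (nonemptiness) statement and I expect it to require a separation-type argument in disguise. The strategy is contrapositive: assume $\Sy_{++}^n\cap\mathcal{V}=\emptyset$ and produce a nonzero $X\in\Sy_+^n\cap\mathcal{V}^\perp$.

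To realize the forward direction with linear-algebra tools only, as the paper advertises, I would proceed as follows. Assuming $\mathcal{V}$ meets no positive definite matrix, the intersection $\Sy_+^n\cap\mathcal{V}$ consists only of singular positive semidefinite matrices (including possibly just $\{0\}$). Consider the orthogonal projection $\pi:\Sy^n\to\mathcal{V}^\perp$. One natural approach is to look at the image of the cone: either $\pi(\Sy_+^n)$ fails to be all of $\mathcal{V}^\perp$, which by self-duality lets one extract a nonzero $X\in\Sy_+^n$ orthogonal to everything in $\mathcal{V}$; or one builds $X$ as a limit/boundary element by taking a sequence in $\mathcal{V}$ whose trace pairing against $\Sy_+^n$ degenerates. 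Concretely, I would consider minimizing $\tr(S)$ (or an analogous strictly convex functional) over the affine slice $\{S\in\mathcal{V} : \tr(S)=1\}$ intersected with $\Sy_+^n$; the absence of a positive definite point forces an optimal $S$ on the relative boundary, and the complementary kernel direction produces the desired orthogonal PSD witness $X$.

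The main difficulty I anticipate is precisely this extraction of the nonzero PSD certificate $X\in\mathcal{V}^\perp$ using only orthogonal-complement machinery rather than invoking the Hahn-Banach/separation theorem outright; the self-duality of $\Sy_+^n$ is the crucial ingredient that lets the orthogonality bookkeeping substitute for the separating hyperplane. I would also need to handle the degenerate case where $\Sy_+^n\cap\mathcal{V}=\{0\}$ separately, since then every nonzero element of $\mathcal{V}$ is indefinite and the projection argument simplifies. The paper's claim of a ``stronger result'' suggests the argument actually yields more than mere nonemptiness (perhaps an explicit description of which $X$ arise, or a dimension count via $\dim\mathcal{V}+\dim\mathcal{V}^\perp = \tau(n)$), so I would keep the construction explicit enough to read off that strengthening.
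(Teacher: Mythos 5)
Your ($\Leftarrow$) half is correct and complete: with $\mathcal{V}:=\sp(A_1,\ldots,A_m)$, a positive definite $S\in\mathcal{V}$ and a PSD $X\in\mathcal{V}^\perp$ give $\tr(SX)=\tr(L^{\mathrm{T}}XL)=0$ with $L$ invertible, whence $X=0$. For calibration: the paper never actually writes out a proof of this proposition (it attributes it to Bohnenblust and cites \cite{KlepSch13,Zhao2013}; the advertised orthogonal-complement argument does not appear in the text), so your attempt stands or falls on its own terms.

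It falls on the ($\Rightarrow$) half, where the gap is genuine. Your ``concrete'' construction is incoherent: minimizing $\tr(S)$ over $\{S\in\mathcal{V}:\ \tr(S)=1\}\cap\Sy_+^n$ minimizes a function that is identically $1$ on its feasible set, and that set can be empty outright --- for $\mathcal{V}=\sp(\diag(1,-1))\subset\Sy^2$ the trace vanishes identically on $\mathcal{V}$, so there is no ``optimal $S$ on the relative boundary'' at all, yet the required nonzero PSD witness in $\mathcal{V}^\perp$ (here $I_2$) still has to be produced. Moreover, even when a singular $S\in\mathcal{V}\cap\Sy_+^n$ exists, a kernel vector $v$ of $S$ gives only $\tr(Svv^{\mathrm{T}})=0$, i.e.\ $vv^{\mathrm{T}}\perp S$; it does not give $vv^{\mathrm{T}}\perp\mathcal{V}$, so ``the complementary kernel direction'' is not a witness. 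Your alternative sketch --- if $\pi(\Sy_+^n)\neq\mathcal{V}^\perp$, extract a nonzero dual vector --- is precisely the supporting-hyperplane/bipolar theorem for convex cones, the separation machinery you set out to avoid, with the further unhandled subtlety that projections of closed cones need not be closed. The repair is to prove ($\Rightarrow$) \emph{directly}, not by contraposition. Assume $\Sy_+^n\cap\mathcal{V}^\perp=\{0\}$ and set $\Delta=\{X\succeq 0:\ \tr(X)=1\}$, a compact convex set disjoint from $\mathcal{V}^\perp$. Minimize $f(X)=\|\pi_{\mathcal{V}}(X)\|^2$ over $\Delta$, with $\pi_{\mathcal{V}}$ the orthogonal projection onto $\mathcal{V}$ and $\|\cdot\|$ the norm of your trace inner product. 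The minimum is attained at some $X_0$ and is positive, so $S:=\pi_{\mathcal{V}}(X_0)\in\mathcal{V}\setminus\{0\}$; convexity of $\Delta$ and optimality of $X_0$ give $\langle S,\pi_{\mathcal{V}}(X-X_0)\rangle\geq 0$ for every $X\in\Delta$, and since $\pi_{\mathcal{V}}$ is self-adjoint and fixes $S$, this reads $\tr(SX)\geq\tr(SX_0)=\|S\|^2>0$. Taking $X=vv^{\mathrm{T}}$ with $\|v\|=1$ yields $v^{\mathrm{T}}Sv\geq\|S\|^2>0$, hence $S\in\Sy_{++}^n\cap\mathcal{V}$ (with the explicit bound $\lambda_{\min}(S)\geq\|S\|^2$, which is more than bare existence and matches the flavor of ``stronger result'' the paper promises). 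The direct form is essential for the reason you half-anticipated: in your contrapositive set-up the two sets to be separated, $\mathcal{V}$ and $\Sy_{++}^n$, can be at distance zero (e.g.\ $\mathcal{V}=\sp(\diag(1,0))$), and there naive distance minimization produces nothing.
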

%

We have already known by Proposition \ref{prop_ChoFac} i)
that any positive semideifnite matrix $X\in \Sy_+^n$ with $\rk(X)=r$ can be expressed as 
$X= \sum\limits_{i=1}^{r} x_i x_i^T$ for some $x_i\in \R^n,$ $i=1, \ldots, r.$
Based on the fact
$$
\tr(A_i^TX) = \tr(A_i \sum \limits_{i=1}^{r} x_jx_j^T) = \sum_{j=1}^r x_j^TA_i x_j, \quad \forall i=1, \ldots, m,
$$
the problem of finding a low-rank solution to (\ref{eq_mprob}) is of the form 
\begin{equation}\label{eq_qmprob2}
 \hat{x}^T \widehat{A}_i \hat{x} = b_i, \quad i=1, \ldots, m,
\end{equation}
where the coefficient matrices now are 
$\widehat{A}_i:=\oplus_{j=1}^{r} A_j$ 
and 
$\hat{x} = [x_1^T \enskip \ldots \enskip x_r^T]^T.$
It is clear that a nonzero solution to (\ref{eq_qmprob2}) gives a solution to system (\ref{eq_mprob})
with rank less than or equal to $r.$ 
This is because of that $x_1, \ldots, x_r$ might be linearly dependent.
We thus have the following.
\begin{prop} \label{prop_r1-lr}
If system (\ref{eq_mprob}) has a solution of rank $r$ then system (\ref{eq_qmprob2}) has a nonzero solution.
Conversely,
if system(\ref{eq_qmprob2}) has a nonzero solution then system (\ref{eq_mprob}) has a solution of rank less than or equal to $r.$ 
\end{prop}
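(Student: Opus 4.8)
The plan is to read both implications off the single rank-one decomposition identity that was used to derive the quadratic system (\ref{eq_qmprob2}) in the first place, applied in opposite directions. Throughout I treat the solutions of (\ref{eq_mprob}) as positive semidefinite, in keeping with the setting of this section, since that is exactly what makes the decomposition $X=\sum_{j=1}^r x_jx_j^T$ available.

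For the forward implication, I would begin with a PSD solution $X$ of (\ref{eq_mprob}) having $\rk(X)=r$. By Proposition \ref{prop_ChoFac} i) I can write $X=LL^T$ with $L\in\R^{n\times r}$; letting $x_1,\ldots,x_r$ be the columns of $L$ gives $X=\sum_{j=1}^r x_jx_j^T$. Since $\rk(L)=\rk(LL^T)=r$ by Proposition \ref{prop_rkMat} i), the factor $L$ has full column rank, so no column vanishes and the stacked vector $\hat x=[x_1^T\ \ldots\ x_r^T]^T$ is nonzero. Substituting into the trace and using $A_i^T=A_i$ yields $\hat x^T\widehat A_i\hat x=\sum_{j=1}^r x_j^TA_ix_j=\tr(A_iX)=b_i$, so $\hat x$ is a nonzero solution of (\ref{eq_qmprob2}), as required.

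For the converse I would reverse this computation. Given a nonzero solution $\hat x=[x_1^T\ \ldots\ x_r^T]^T$ of (\ref{eq_qmprob2}), set $X:=\sum_{j=1}^r x_jx_j^T$. This $X$ is symmetric and positive semidefinite, and the same trace identity gives $\tr(A_iX)=\hat x^T\widehat A_i\hat x=b_i$, so $X$ solves (\ref{eq_mprob}). Being a sum of $r$ rank-one PSD matrices forces $\rk(X)\le r$, and since $\hat x\neq 0$ guarantees some $x_j\neq 0$, the PSD summands cannot cancel, so $X\neq 0$. This produces a genuine nonzero solution of rank at most $r$, which is exactly the stated bound; I would add the remark that equality of rank need not hold, precisely because the $x_j$ may be linearly dependent, which is why only the inequality $\rk(X)\le r$ is asserted.

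The argument is essentially bookkeeping around one algebraic identity, so I do not expect a deep obstacle. The only points needing care are (i) justifying the rank-$r$ factorization with exactly $r$ columns, which is where Proposition \ref{prop_ChoFac} and the rank identity of Proposition \ref{prop_rkMat} enter, and (ii) tracking nonvanishing, namely ensuring $\hat x\neq 0$ in the forward direction and $X\neq 0$ in the converse, since the content of the statement is an equivalence of \emph{nonzero} (low-rank) solutions rather than merely of the equations themselves.
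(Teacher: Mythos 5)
Your proof is correct and follows essentially the same route as the paper, whose justification is exactly the discussion preceding the proposition: the Cholesky-based decomposition $X=\sum_{j=1}^r x_jx_j^{\mathrm{T}}$, the trace identity $\tr(A_iX)=\sum_{j=1}^r x_j^{\mathrm{T}}A_ix_j$, and the observation that possible linear dependence of the $x_j$ is why the converse only gives $\rk(X)\le r$. Your explicit restriction to positive semidefinite solutions and the nonvanishing bookkeeping ($\hat x\neq 0$, $X\neq 0$) simply make precise what the paper leaves implicit in the PSD setting of this section.
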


\begin{rema}\rm
i) When 
system (\ref{eq_mprob}), with $b\neq 0,$ has a nonzero positive semidefinite solution then by the work of Barvinok \cite{r662}, there is another positive solution with the rank at most
$
\dfrac{\sqrt{8m +1} -1}{2}.
$
This upper bound is smaller than $n$ since
$m\leq \tau(n).$
For us
this bound is sharpest by now.

For homogeneous system (\ref{eq_hmprob}), this bound does not necessary hold \cite{Zhao2013}.

ii) According to the works \cite{r605, HM2016}, one obtains 
$$
\max\left\{ \rk X:\ X\in \sp(A_1, \ldots, A_m) \right\} \geq \dfrac{2n+1 - \sqrt{(2n+1)^2 -8m} }{2}.
$$ 
This, indeed, follows from the proofs for lower bound in \cite{r605, HM2016}.
\end{rema}


\subsection{Algorithm}
Even though this is a particular case of the ARM-problem over arbitrary matrices,
Proposition \ref{prop_ChoFac}
allows us to find a Cholesky factor $Y \in \R^{n \times r}$ instead of a positive semidefinite matrix,
and
this leads to a reduction in number of variables for the ARM-problem.
So problem (\ref{eq_Frmp_prob}) can be cast in the following form
\begin{equation}\label{eq_CholRMP_prop}
\begin{array}{llll}
\mbox{minimize} & \rk(Y) \\
\mbox{subject to}& \\
								& [\tr(A_1^\mathrm{T} YY^\mathrm{T})\quad\ldots \quad \tr(A_k^\mathrm{T} YY^\mathrm{T})]^\mathrm{T}=\ell(YY^\mathrm{T}) = b. 
\end{array}
\end{equation}
The idea for solving this problem is similar to the previous case,
where one checks whether there exists a matrix with  lowest possible rank satisfying the requirements.
In this situation,
at the step corresponding to $r,$
the following function is applied:
$F: \R^{n\times r} \longrightarrow \R^k$
defined by
$$
F(Y) = \ell(YY^\mathrm{T}) -b, \quad \forall Y \in  \R^{n\times r} .
$$
The coordinate functions
$F_i: \R^{n\times r}  \longrightarrow \R^k$
are obviously defined by
$$
F_i(Y) =  \tr(A_i^\mathrm{T}YY^\mathrm{T}) -b_i, \quad \forall Y \in  \R^{n\times r}.
$$
The Jacobian  matrix of $F$ in this case follows from (\ref{eq_JacTr1}):
$$
\jac (F) 
= \frac{\partial F}{\partial Y} 
= \frac{\partial \vect F}{\partial \vect Y}
= \begin{bmatrix}
	\frac{\partial F_1}{\partial Y} \\ 
	\vdots \\
	\frac{\partial F_k}{\partial Y}  
\end{bmatrix}
\in \R^{k\times rn},
$$
where
\begin{align*}
\frac{\partial F_i}{\partial Y} 
&=  \frac{\partial \tr(A_i^\mathrm{T}Y Y^\mathrm{T}) }{\partial Y}
= \vect[(A_i^\mathrm{T}+A_i)Y]^\mathrm{T}.
\end{align*}

\section{Applications} \label{sec_app}

In this section, we consider three applications 
of problem (\ref{eq_Frmp_prob}).

\subsection{Low-rank matrix completion}
In machine learning scenarios,
e.g., in factor analysis, collaborative filtering, and latent semantic
analysis \cite{Rennie2005, Srebro2004,RFP10},
there are several problems that can be reformulated as the low-rank matrix completion problem.
Given the values of some entries of a matrix, 
this problem fills the missing entries of the matrix 
such that its rank is small as possible.
This problem is summarized and reformulated as follows.
Given a set of triples
$$
(R,C,S) \in \{1,\ldots, m\}^k \times \{1,\ldots, n\}^k \times \R^k, 
$$
and we wish to construct a small-as-possible rank matrix 
$X=[X_{rs}]\in \R^{m\times n},$ 
such that 
$X_{R(i), C(i)} = S(i) $
for all $i=1,\ldots,p.$
This can be reformulated as
\begin{equation}\label{eq_mc}
\begin{array}{llll}
\mbox{minimize} & \rk(X) \\
\mbox{subject to}& \\
								& X_{R(i), C(i)} = S(i), \quad \forall i=1,\ldots, p. 
\end{array}
\end{equation}
This problem can then be  solved by using Algorithm \ref{alg0}.

\subsection{Low-dimensional Euclidean embedding problems}

Euclidean distance matrices, shortly EDMs, have received 
increased attention because of its many applications
which can be found in
eg., \cite{RFP10,Dattorro05, Parhizkar13, BG05} and references there in.

We first recall this problem.
Let 
$D=[d_{ij}]\in \Sy^n$ 
be  a
{\it Euclidean distance matrix} 
(EDM)
associated to the points 
$x_1,\ldots, x_n \in \R^r,$
i.e.,
\begin{equation} \label{eq_edm_cond}
d_{ij} = \|x_i-x_j \|^2 = x_i^\mathrm{T}x_i + x_j^\mathrm{T} x_j - 2 x_i^\mathrm{T} x_j, \quad i,j= 1,\ldots, n.
\end{equation}
The smallest positive integer number $r$ is said to be the
\textit{embedding dimension} of $D.$

Following \cite{RFP10},
let
$
\mathbf{1} \in \R^{n\times 1} 
$
be the column vector of ones.
Define
$
V \triangleq I_n -\frac{1}{n} \mathbf{1}\mathbf{1}^\mathrm{T}.
$
Note that 
$V$ 
is the orthogonal projection matrix onto the hyperplane 
$\{v\in \R^{n\times 1}:\ \mathbf{1}^\mathrm{T} v = 0\}.$
In particular,
$$\rk(V) = n-1$$
and $V$ has an eigenvector $\mathbf{1}$ with respect to the eigenvalue zero.
It follows from the work in \cite{Schoenberg35} that
$D$
is an EDM of $n$ points in $\R^r$
if and only if three following conditions hold:
\begin{align*}
d_{ii}=0,& \quad\forall i=1,\ldots,n;\\
- VDV \succeq 0; & \\
\rk(VDV) \leq r. 
\end{align*}
Given a positive integer number $n$ and 
\textit{partial Euclidean matrix}
$D_0,$ 
i.e.,
every entry of $D_0$ is either ``specified'' or ``unspecified'', $\diag(D_0) = 0,$ 
and
every fully specified principal sub-matrix of $D_0$ is also a Euclidean distance matrix. 
The 
\textit{low-dimensional Euclidean embedding problem} 
finds 
a Euclidean matrix $D$ consistent with the known pairwise distances described by $D_0$ 
and associated to a number of points in the smallest dimensional space $\R^r.$
Such a problem can be reformulated as the ARM-problem \cite{RFP10}
\begin{equation}\label{eq_edm_prob}
\begin{array}{llll}
\mbox{minimize} & \rk(VDV) \\
\mbox{subject to}& \\
								&- VDV \succeq 0 , \\
								&\quad \ell(D) = b, 
\end{array}
\end{equation}
where $\ell: \Sy^n \rightarrow \R^p$ is 
an appropriate linear map, corresponding to the specified entries in $D_0$,
including the condition that makes the diagonal of $D$ to be zero.
If one sets
$X = [x_1 \ldots x_n] \in \R^{r\times n}$
then $D$ can be found in form
\begin{equation}
D=\mathcal{D} (X) := \diag(X^\mathrm{T} X) \mathbf{1}^\mathrm{T} + \mathbf{1} \diag(X^\mathrm{T}X)^\mathrm{T} - 2 X^\mathrm{T}X
\end{equation}
because of  (\ref{eq_edm_cond}).
Since $V \mathbf{1} =0,$ 
\begin{equation}
- V\mathcal{D}(X) V  = 2 VX^\mathrm{T} XV.
\end{equation}
Substituting this fact into problem (\ref{eq_edm_prob}) we get the equivalent one:
\begin{equation}\label{eq_edm_prob2}
\begin{array}{llll}
\mbox{minimize} & \rk(XV) \\
\mbox{subject to}& \\
								& X \in \R^{r \times n},\\
								& \ell(\mathcal{D}(X)) = b.
\end{array}
\end{equation}
It is clear that the above problem is of the form of problem (\ref{eq_Frmp_prob}) with 
$\phi(X) = \ell(\mathcal{D}(X)).$

We make the 
linear map $\ell$ more explicit as in \cite{KW12}.
Let $H$ be the $1$-$0$ adjacency matrix, i. e.,
$$
h_{ij} =
\left\{
\begin{array}{lll}
1 & \mbox{if} & (i,j) \in E,\\
0 & \mbox{if} & (i,j) \not\in E,
\end{array}
\right.
$$  
for the set of subscripts $E$ corresponding to the specified entries of $D_0$.
The main problem is to find an as-small-as-possible rank completion $D$ of $D_0.$
Namely,
one needs to find $D$ in the form
\begin{equation} \label{eq_D-Z}
\begin{array}{rll}
D &=& \diag(Z) \mathbf{1}^\mathrm{T} + \mathbf{1} \diag(Z)^\mathrm{T} - 2 Z,\\
Z &=& X^\mathrm{T} X,\\
H\odot D &=& H\odot D_0,
\end{array}
\end{equation}
where $\odot$ denotes the component-wise (or Hadamard) matrix product.
With the help of the fact $\rk(XV) \leq \rk(X),$
problem (\ref{eq_D-Z}) is then reduced to the rank minimization problem in the form
\begin{equation} \label{eq_D-Y}
\begin{array}{lrll}
\mbox{minimize} & \rk(X) \\
\mbox{subject to} \\ 
& X \in \R^{r \times n},\\
&H\odot \mathcal{D}(X) &=& H\odot D_0.
\end{array}
\end{equation}
In many applications one seeks the embedding dimension being two or three.

Note that when $D$ is determined as $D = \mathcal{D}(X),$
the following formula is used to compute the Jacobian matrices used in the Levenberg-Marquardt 
algorithm:
$$
\dfrac{\partial d_{ij}}{\partial X} =
\left[
\underbrace{0 \ldots 0}_{r \mbox{ \scriptsize times} } 
\ldots 
\underbrace{0 \ldots 0}_{r \mbox{ \scriptsize times} }
\enskip
2 (x_i^\mathrm{T} - x_j^\mathrm{T})
\enskip
0
\ldots
0
\enskip
2 (x_j^\mathrm{T} - x_i^\mathrm{T})
\enskip
0 \ldots 0
\right]
\in \R^{1\times nr}.
$$
Here we assume $i\leq j.$

\section{Numerical experiments} \label{sec_numexp}

\subsection{General RM-problem and quadratic systems}

From the theoretical point of view,
the rank minimization problem is NP-hard so that 
there has not been any method directly solve this one in the literature.
A good way for solving the RM-problem  over positive semidefinite matrices  is to solve the corresponding  problem
that minimize the nuclear norm (see, eg., \cite{RFP10,Ma11}).
The nuclear norm minimization problem  (NNM-problem) 
is a really good one to give suitable lower and upper bounds for the 
original RM-problem.
Additionally, 
in \cite{RFP10} it is proved that
the NNM-problem over general matrices is tractable to solve since
it can be reformulated as a semidefinite program \cite{FHB01}.
Another method for solving the NNM-problem over positive semidefinite matrices
was proposed in \cite{Ma11} by using 
Modified
Fixed Point Continuation Method.

We now illustrate the RM-problem over generic matrices.
Table \ref{table_xrmp} shows the results 
obtained by Algorithms \ref{alg0} for this case.
The matrices $A_1,\ldots, A_k$ are randomly chosen with entries in $(0,1).$
The backward errors are  determined by
$$
\mathrm{err} = \frac{\| \ell(X)-b \|_2}{\|b\|_2}.
$$
The result for each case is averagely taken per three experiments. 
The numerical results show that 
Algorithm \ref{alg0} gives better solutions if the factorization in Proposition \ref{prop_rkMat} is applied.
More precisely,
we see in Table \ref{table_xrmp}, 
the results when Proposition \ref{prop_rkMat} is applied have smaller rank.
Table \ref{table_xrmp} also exhibits a comparison  between our method and the one described in \cite{RFP10}.

The NNM-problem approximating problem (\ref{eq_rmp_prob}) is followed in \cite{RFP10} and 
can be summarize as follows
\begin{equation} \label{eq_nnmp}
\begin{array}{llll}
\mbox{minimize} & \| X\|_* \\
\mbox{subject to} \\ 
 &\ell(X) = b,
\end{array}
\end{equation}
where $\|.\|_*$ denotes the nuclear norm of $X,$
which is the sum of all its singular values.
If $X$ has a singular value decomposition 
$X = U \Sigma V^\mathrm{T}$ then
one can solve problem (\ref{eq_nnmp}) by solving the semidefinite program:
\begin{equation} \label{eq_nnmp_SDP}
\begin{array}{llll}
\mbox{minimize} & \frac{1}{2}( \tr(W_1) + \tr(W_2) ) \\
\mbox{subject to} \\ 
& \begin{bmatrix} W_1 & X \\  X^\mathrm{T} & W_2 \end{bmatrix} \succeq 0,\\
 &\ell(X) = b.
\end{array}
\end{equation}
This is nice formulation in theoretical point of view but 
in practice the resulting matrices  may have ``high-rank'' by SDP solvers . 
One can see in Table \ref{table_xrmp}, 
where problem (\ref{eq_nnmp_SDP}) is implemented in \textsf{CVX} toolbox \cite{b470} of \textsc{Matlab} calling \textsf{Sedumi} \cite{Sturm99}, 
that 
the semidefinite program seems to give resulting matrices with full rank 
and less accuracy.
\begin{table}
\centerline{
\begin{tabular}{lll|| l|| l || l } 
\\
 $m$  & $n$ &  $k$ & \begin{tabular}{cc} $X(:,1:r)$ \\ \hline $\rk(X)$ & $\mathrm{err}$   \end{tabular} & 
 \begin{tabular}{lr} $X = Y^T Z$ \\ \hline $\rk(X)$ & $\mathrm{err}$   \end{tabular}  
 &  \begin{tabular}{lr} SDP  \\ \hline $\rk(X)$ & $\mathrm{err}$   \end{tabular}
 \\  \hline 
 5       & 6		& 4		& 1 \quad \qquad 5.31e-16 &	1 \quad \qquad 2.44e-16	& 5 \quad \qquad \ 1.02e-09  			\\
 51     & 50	   & 51 	& 1 \quad \qquad  7.13e-15& 	1 \quad \qquad 4.49e-15 &50 \quad \qquad 8.42e-10  			\\
 50     & 100  & 81    & 2  \quad \qquad  5.56e-16&	1 \quad \qquad	 9.42e-16 & 50 \quad \qquad 
 3.38e-09\\
 50     & 200  & 100  &   3 \qquad\quad 5.46e-16&	1 \qquad\quad 7.31e-16	& 50 \quad \qquad 
 4.08e-09\\
 100   & 200  & 300  &   3   \qquad \quad 2.63e-14& 2   \qquad \quad	3.83e-15
 & \mbox{out of memory}\\
 500   & 550  & 300  &   1  \qquad \quad 1.66e-15& 1		\qquad \quad 1.93e-14	
 & \mbox{out of memory}\\
 500 	& 500 &  450 &  	1 \qquad \quad	   2.04e-15 & 1		\qquad \quad 1.10e-14
 & \mbox{out of memory}
\end{tabular}
}
\caption{Comparison between LM-method and SDP 
solving the RM-problem over $m\times n$ matrices.}  \label{table_xrmp}  
\end{table}

For the RM-problem over positive semidefinite matrices,
the experiments perform with randomly chosen symmetric matrices
$A_1,$ 
$\ldots,$ 
$A_k.$
The result for each case is also averagely taken per three experiments. 

Table \ref{table_srmp} shows 
a comparison between our method and the one described in \cite{Ma11}.
The errors in this table are computed as
$$
\mathrm{err} = \frac{\|\ell(X) -b\|_2}{\|b\|_2}.
$$
\begin{table}[ht]
\centerline{
\begin{tabular}{c  c | c | c}
\\
$n$ &  $k$ & 
\begin{tabular}{cc} \rk(X)  &  \\  \hline  LM    &  AFPC-BB \end{tabular} &  
\begin{tabular}{cc} err & \\  \hline  LM      & \qquad AFPC-BB \end{tabular}
\\ \hline \hline
100      & 579   &
\begin{tabular}{cr} 
 6		 & \qquad  10 			
\end{tabular}
&
\begin{tabular}{cc} 
 1.89e-16		& \quad 9.46e-4 			
\end{tabular}
\\
200      & 1221   &
\begin{tabular}{cc} 
 7		&  		\qquad 10	
\end{tabular}
&
\begin{tabular}{cc} 
 1.87e-15		&\quad 9.84e-4
\end{tabular}
\\
500      & 5124   &
\begin{tabular}{cc} 
 11		&  		\qquad 10	
\end{tabular}
&
\begin{tabular}{cc} 
 2.52e-15		&\quad 4.90e-3
\end{tabular}
\\
500      & 3309   &
\begin{tabular}{cc} 
 7		&  		\qquad 27	
\end{tabular}
&
\begin{tabular}{cc} 
 3.00e-15	&\qquad \quad \mbox{ NA }
\end{tabular}
\end{tabular}
}
\caption{Comparison between LM-method and AFPC-BB 
solving the RM-problem over positive semidefinite matrices. 
The error of the method AFPC for the case $(n,k)=(500, 3309)$ 
is not shown in \cite{Ma11}}.
 \label{table_srmp}
\end{table}
What we see in Table  \ref{table_srmp} that the values of the rank of resulting matrices obtained by our method are 
smaller the ones obtained by solving the corresponding NNM-problem.

Table \ref{table_lrmc} shows the results for several values of $m,n.$
We  take $R,C\in \N^k $  with the entries are random in 
$\{1,\ldots, m\},$ 
$\{1,\ldots, n \},$
respectively,
and so is 
$S\in (0,1)^k.$
For the cases $m=n,$ it turns out the results for the systems of quadratic equations.
More precisely,
the system has solution if the solutions' have rank one.

\begin{table}[ht]
\centerline{
\begin{tabular}{lllc || lllc} 
\\
 $m$  & $n$ &  $k$ & \rk(X) & $m$  & $n$ &  $k$ & \rk(X)  \\  \hline 
 5       & 6		& 4		& 4 			&50    & 50    &  51 & 		2	\\
 51     & 50	   & 51 	&  3  			& 100 	& 100 &  50 &     1\\
 50     & 100  & 81    &   1  			& 150 	& 150 &  100 &  	1		\\
 50     & 200  & 100  &    1			& 200 	& 200 &  200 &       2\\
 100   & 200  & 300  &     3 			& 400 	& 400 &  350 &       1\\
 500   & 550  & 300  &     1 			& 500 	& 500 &  450 &       1\	
\end{tabular}
}
\caption{Solution to the low-rank matrix completion using LM-method.}\label{table_lrmc}
\end{table}

\subsection{Euclidean distance matrix problem}
This section shows the numerical results for problem (\ref{eq_D-Y}).
All tests are dealt with partial Euclidean matrices $D_0$ with entries randomly taken in the interval $[0,1].$

Table \ref{table_edm} shows the Euclidean embedding dimensions for all cases that $D_0$ are dense, i.e., 
the entries of the corresponding matrix $H$ are all one.
\begin{table}[h]  
\centerline{
\begin{tabular}{lcc || lcc} 
\\
  $n$ & \rk(X) & err   				&  $n$ & \rk(X)  & err\\  \hline 
4		&  2			& 5.09e-16         &100    & 2		& 1.33e -14			\\
10 	&  2			&	9.38e-16			& 150 	& 2			&1.99e-14  \\
20    &   2  		&	2.09e-15			& 200 	& 2			&2.54e-14 			\\
30  & 2  			&4.04e-15 			& 300 	&  2			& 3.84e-14   \\
40  & 2   			&6.10e-15			& 400 	&  2        & 4.20e-14\\
50  & 2   			&	5.98e -15		& 500 	&  2       &  4.32e-14	
\end{tabular}
}
\caption{Solution to EDM problems with respect to dense partial EDM matrices.}\label{table_edm}
\end{table}

Table \ref{table_edm2} shows the  results for sparse matrices $D_0,$  i.e., 
the entries of the corresponding matrix $H$ are either zero or one.
\begin{table}[h]  
\centerline{
\begin{tabular}{lcc || lcc} 
\\
  $n$ & \rk(X) & err   				&  $n$ & \rk(X)  & err\\  \hline 
4		&  2			& 1.95e-16         &100    & 2		& 4.03e -16			\\
10 	&  2			&	5.16e-16			& 150 	& 2			&3.97e-16  \\
20    &   2  		&	4.56e-16			& 200 	& 2			&4.23e-16 			\\
30  & 2  			&4.85e-16 			& 300 	&  2			& 3.89e-16   \\
40  & 2   			&4.15e-16			& 400 	&  2			& 4.42e -16    \\
50  & 2   			&	4.00e -16		& 500 	&  2			& 4.54e -16    \	
\end{tabular}
}
\caption{Solutions to EDM problems with respect to randomly-chosen sparse partial EDM matrices.}\label{table_edm2}
\end{table}

The backward errors of
all tests in both cases of $D_0$
 are determined as
$$
\mbox{err } = \dfrac{\| D - D_0\|_2}{\|D_0\|_2}.
$$
It turns out that the configurations of our experiments are all in two dimensional spaces.

\section{Conclusion and discussion}
We have proposed  an algorithm for solving the rank minimization problem over a subset of $\R^{m\times n}$ 
determined by a differentiable function. 
As a consequence, the affine rank minimization problems over either arbitrary or positive semidefinite matrices have been numerically tested.
This  algorithm was then applied to 
solve the low-rank matrix completion problem and  
the low-dimensional Euclidean embedding problem.
Some numerical experiments have been performed to illustrate our algorithms as well as the applications.

We have also developed some useful properties for low rank solutions to systems of linear matrix equations.
This suggests us a reformulation of
the IIR and FIR low-pass filter problems 
described in \cite{LeVaBa16}
as optimization problems over rank-one  positive semidefinite matrices.
In the future we will deal with this method to solve such filter design problem.
This might be suitable because the resulting positive semidefinite matrices derived by SDP solvers in \cite{LeVaBa16}
are usually full rank.
Obviously, this requires a much more amount of memory and complexity in comparison with rank-one setting.

\vskip4mm
\noindent
\textbf{Acknowledgement.}
The author would like to thank Prof. Marc Van Barel for 
his valuable discussion which led to improvements in the manuscript and
the \textsc{Matlab} codes.


  \end{document}